\newtheorem{theorem}{Theorem}[section]
\newtheorem{lemma}[theorem]{Lemma}
\newtheorem{proposition}[theorem]{Proposition}
\newtheorem{corollary}[theorem]{Corollary}
\theoremstyle{definition}
\newtheorem{conjecture}[theorem]{Conjecture}
\theoremstyle{remark}
\newtheorem{remark}[theorem]{Remark}
\numberwithin{equation}{section}
\begin{document}

\title[Projective Hulls and Meromorphic Functions]{Projective Hulls and Characterizations of Meromorphic Functions}


\author[J.T. Anderson]{John T. Anderson}
\address{Department of Mathematics and Computer Science, College of the Holy Cross, Worcester, MA 01610, U.S.A.}
\email{anderson@mathcs.holycross.edu}
\author[J.A. Cima]{Joseph A. Cima}
\address{Department of Mathematics, University of North Carolina, Chapel Hill, NC 27599, U.S.A. }
\email{cima@email.unc.edu}

\author[N. Levenberg]{Norm Levenberg}
\address{Department of Mathematics, Indiana University, Bloomington, IN, 47405, U.S.A.}
\email{nlevenbe@indiana.edu}

\author[T.J. Ransford]{Thomas J. Ransford}
\address{D\'{e}partment de math\'{e}matiques et de statistique, Universit\'{e} Laval, Qu\'{e}bec (QC), Canada G1V 0A6}
\email{ransford@mat.ulaval.ca}
\thanks{The last-named author was supported in part by grants from NSERC and the Canada Research Chairs program. The first-named author was supported in part by an Arthur J. O'Leary Faculty Recognition Award from the College of the Holy Cross. Most of the results described in this paper were obtained while the authors were participants in a ``Research In Teams'' group at the Banff International Research Station (BIRS).  We are grateful to BIRS for their hospitality.}

\subjclass[2010]{Primary 32U15, Secondary 32E99, 30J99}

\keywords{Projective Hulls, Maximum Principle, Pluripotential Theory}

\date{\today}


\begin{abstract}
We give conditions characterizing holomorphic and meromorphic functions in the unit disk of the complex plane in terms of certain weak forms of the maximum principle. Our work is directly inspired by recent results of John Wermer, and by the theory of the projective hull of a compact subset of complex projective space developed by Reese Harvey and Blaine Lawson.
\end{abstract}

\maketitle

\section{Introduction} \label{intro}

\noindent For a compact subset $K$ of complex Euclidean space $\mathbb{C}^n$, we let $C(K)$ be the Banach space of continuous complex-valued functions on $K$ with norm \[ \|f\|_{K} = \max\{ |f(\zeta)| : \zeta \in K \}. \]
The classical maximum principle states that if $\Omega$ is a bounded connected open subset of $\mathbb{C}^n$, and $f$ is holomorphic and non-constant in $\Omega$ and continuous on $\overline{\Omega}$, then for all $z \in \Omega$
\[  |f(z)| < \|f\|_{\partial \Omega}. \]
Our goal in this paper is to explore converses to this basic property of holomorphic functions, i.e., to establish characterizations of holomorphic and meromorphic functions in terms of the existence of certain weak forms of the maximum principle.

One such result, established by Walter Rudin in 1953 \cite{R1}, goes as follows: let $\mathbb{D}$ be the open unit disk in the complex plane, and let $\mathbb{T}$ be its boundary.  Let $A$ be an algebra of continuous complex-valued functions on the closed unit disk $\overline{\mathbb{D}}$ such that \begin{enumerate}[i.]
       \item for each $f \in A$ and $z \in \mathbb{D}$, we have $|f(z)| \leq \|f\|_{\mathbb{T}}$, and
       \item $A$ contains the disk algebra $A(\mathbb{D})$ of all functions holomorphic in $\mathbb{D}$ and continuous in $\overline{\mathbb{D}}$,
     \end{enumerate}
then $A = A(\mathbb{D})$.  In fact, this is a consequence of more general theorems that Rudin proved in \cite{R1} concerning maximum modulus algebras on plane sets.

Rudin  (\cite{R2}, Theorem 12.12) later relaxed the requirement that the set of functions in question be an algebra. He showed that if $V$ is a vector subspace of the space $C(\overline{\mathbb{D}})$ of continuous complex-valued functions on $\overline{\mathbb{D}}$ such that
\begin{enumerate}[i.]
       \item for each $f \in V$ and $z \in \mathbb{D}$, we have $|f(z)| \leq \|f\|_{\mathbb{T}}$;
       \item $V$ contains the constant function $1$; and
       \item for each $f \in V$, the function $g(z) = zf(z)$ also belongs to $V$,
     \end{enumerate}
then $V = A(\mathbb{D})$.

Note that the hypothesis (i) in both of these results is a weak form of the maximum principle: we do not demand that the maximum of $|f|$ is attained \emph{only} on $\mathbb{T}$.

Rudin's later theorem can be rephrased as a statement about modules over the disk algebra: choose $\phi  \in C(\overline{\mathbb{D}})$, and consider the space
\begin{equation} \mathfrak{M} = \{ a + b\phi : a,b \in A(\mathbb{D})  \}.  \label{eq:module} \end{equation}  If for each $f \in \mathfrak{M}$ and $z \in \mathbb{D}$, \begin{equation} |f(z)| \leq \|f\|_{\mathbb{T}} \label{eq:weakmax} \end{equation} then $\mathfrak{M} = A(\mathbb{D})$, i.e.,
$\phi \in A(\mathbb{D})$.  These two formulations are easily seen to be equivalent: first, assuming that (\ref{eq:weakmax}) holds, then $\mathfrak{M}$ is a vector subspace of $C(\overline{\mathbb{D}})$ satisfying the hypotheses (i), (ii) and (iii) of Rudin's result, and so $\mathfrak{M} = A(\mathbb{D})$.   Conversely, given any vector space $V$ satisfying (i), (ii) and (iii), choose any $\phi \in V$.  Then by assumptions (ii) and (iii), $V$ contains all functions of the form $ a + b\phi$ where $a,b$ are  polynomials.  Since polynomials are dense in $A(\mathbb{D})$, assumption (i) implies that (\ref{eq:weakmax}) holds for the space $\mathfrak{M}$ defined in (\ref{eq:module}), and so $\phi$ is holomorphic in $\mathbb{D}$.  Since $\phi \in V$ was arbitrary, $V = A(\mathbb{D})$.

Note that $\mathfrak{M}$ is a module over the disk algebra generated by the constant function 1 and $\phi$.  In fact, the same result holds for any subset $\mathfrak{M}^{\prime}$ of $C(\overline{\mathbb{D}})$ that forms a module over the disk algebra (with the usual multiplication of functions) and contains 1: if we assume (\ref{eq:weakmax}) holds for all $f \in \mathfrak{M}^{\prime}$ then $\mathfrak{M}^{\prime} = A(\mathbb{D})$. To see this, choose any $\phi$ in $\mathfrak{M}^{\prime}$, form the submodule $\mathfrak{M} \subset \mathfrak{M}^{\prime}$ as in (\ref{eq:module}), and conclude that $\phi \in A(\mathbb{D})$.  The same remark obtains for the result we discuss next.

Recently, John Wermer (\cite{W2}, Theorem 2) gave the following generalization of Rudin's Theorem:

\begin{theorem} \label{wermer1} Let $\mathfrak{M}$ be as in (\ref{eq:module}), and assume that for each $z \in \mathbb{D}$ there exists a constant $C_{z}$ so that
\begin{equation} |f(z)| \leq C_{z}\|f \|_{\mathbb{T}} \label{eq:wermermax} \end{equation}
for all $f \in \mathfrak{M}$.  If the restriction of $\phi$ to $\mathbb{T}$ is real-analytic, then $\phi$ is holomorphic in $\mathbb{D}$.
 \end{theorem}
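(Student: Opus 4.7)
The plan is to combine the pointwise bound (\ref{eq:wermermax}) (via Hahn--Banach, which produces representing measures on $\mathbb{T}$) with the real-analyticity of $\phi|_\mathbb{T}$ (which gives a geometrically converging Fourier decomposition) to force $\phi$ on $\mathbb{D}$ to coincide with the holomorphic extension of its own boundary values. The real work is a polynomial approximation argument that annihilates, up to an exponentially small error, the non-negative Fourier modes of $\zeta^N \phi$.

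First, fix $z\in\mathbb{D}$. By (\ref{eq:wermermax}) the evaluation $f\mapsto f(z)$ is a continuous linear functional on $\mathfrak{M}$ in the sup-norm on $\mathbb{T}$, so Hahn--Banach and the Riesz representation theorem yield a complex Borel measure $\mu_z$ on $\mathbb{T}$ with $\|\mu_z\|\le C_z$ and $f(z)=\int_\mathbb{T} f\,d\mu_z$ for every $f\in\mathfrak{M}$; in particular $\mu_z$ represents $z$ on $A(\mathbb{D})$. Since $\phi|_\mathbb{T}$ is real-analytic, its Fourier coefficients $a_n$ satisfy $|a_n|\le CR^{-|n|}$ for some $R>1$. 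Set
\[
\phi_+(\zeta)=\sum_{n\ge 0}a_n\zeta^n,\qquad \phi_-(\zeta)=\sum_{n\ge 1}a_{-n}\zeta^{-n},
\]
so that $\phi_+$ is holomorphic in $\{|\zeta|<R\}$ and in particular lies in $A(\mathbb{D})$, while $\phi_-$ is holomorphic in $\{|\zeta|>1/R\}$ with $\phi_-(\infty)=0$. It suffices to prove that $\psi:=\phi-\phi_+$ is holomorphic on $\mathbb{D}$: for then $\psi$ and $\phi_-$ agree continuously on $\mathbb{T}$, glue together into an entire function vanishing at $\infty$, and hence both vanish by Liouville.

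The core of the argument is the following approximation step. For each positive integer $N$ take $b_N(\zeta)=\zeta^N$ and
\[
a_N(\zeta)=-\zeta^N\phi_+(\zeta)-P_N(\zeta),\qquad P_N(\zeta):=\sum_{n=1}^{N}a_{-n}\zeta^{N-n},
\]
which lies in $A(\mathbb{D})$. On $\mathbb{T}$ the element $f_N:=a_N+b_N\phi$ reduces to the tail $\sum_{n>N}a_{-n}\zeta^{N-n}$, whose sup-norm is $O(R^{-N})$. Applying (\ref{eq:wermermax}) and rearranging,
\[
\bigl|\,z^N\psi(z)-P_N(z)\,\bigr|=|f_N(z)|\le C_z\cdot O(R^{-N}).
\]
Dividing by $|z|^N$ and letting $N\to\infty$, the error vanishes whenever $|z|>1/R$, while $P_N(z)/z^N=\sum_{n=1}^{N}a_{-n}z^{-n}\to\phi_-(z)$. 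Hence $\psi(z)=\phi_-(z)$ on the boundary annulus $1/R<|z|<1$; equivalently, $\phi$ coincides there with the holomorphic extension $\tilde\phi=\phi_++\phi_-$ of its own boundary data.

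The main obstacle is the last step: upgrading holomorphy from this boundary annulus to all of $\mathbb{D}$. The choice $b_N(\zeta)=\zeta^N$ gives $|b_N(z)|=|z|^N$, and the error estimate loses control precisely when $|z|\le 1/R$. The natural remedy is to iterate: once $\phi$ is known to be holomorphic on $\{1/R<|z|<1\}$, its restriction to any circle $\{|\zeta|=r\}$ with $1/R<r<1$ is again real-analytic (as the restriction of $\tilde\phi$), and one would like to rerun the whole argument on the smaller disk $\{|z|<r\}$. The delicate point is that the hypothesis is anchored to $\mathbb{T}$, not to $\{|\zeta|=r\}$, so one must establish a new pointwise bound of the form $|f(z)|\le C'_z\|f\|_{|\zeta|=r}$ for $f=a+b\phi$ and $z\in\{|z|<r\}$; here the fact that each such $f$ now extends holomorphically through the full annulus (since polynomials in $a,b$ are entire and $\tilde\phi$ is holomorphic on $\{1/R<|\zeta|<R\}$) should allow one to transfer the bound through a maximum-modulus/Hadamard three-circles argument. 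If the iteration goes through, finitely many steps exhaust $\mathbb{D}$ and complete the proof; if not, one must instead exploit the uniqueness constraints the extra equations $b(z)\phi(z)=\int b\phi\,d\mu_z$ place on the representing measure $\mu_z$ directly.
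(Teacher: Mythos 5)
Your approximation step is correct and is a genuinely nice use of the real-analyticity: with $b_N=\zeta^N$ and $a_N=-\zeta^N\phi_+-P_N$ one does get $\|a_N+b_N\phi\|_{\mathbb{T}}=O(R^{-N})$, and hence $\phi=\phi_++\phi_-$ on the annulus $\{1/R<|z|<1\}$. But the argument stops there, and neither of your proposed ways of finishing works, so there is a genuine gap. The iteration cannot gain ground: rerunning the argument from the circle $\{|\zeta|=r\}$ uses the decay of the negative Laurent coefficients of $\phi$ measured on that circle, namely $|a_{-n}r^{-n}|\le C(Rr)^{-n}$, and the resulting annulus of holomorphy is $\{r/(Rr)<|z|<r\}=\{1/R<|z|<r\}$, which is contained in what you already have. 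The inner radius $1/R$ is an invariant of the fixed function $\phi_-$ (the boundary of its domain of holomorphy), not of the circle you launch from, so no number of iterations moves past it. Independently, the transferred bound $\|f\|_{\mathbb{T}}\le C\,\|f\|_{|\zeta|=r}$ that you would need is false even for $f=\zeta^N$: the maximum principle on an annulus controls inner circles by outer ones, not the reverse. What is missing is an argument that kills $\phi_-$ outright rather than pushing its putative singularity inward.

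For comparison, the paper obtains Theorem \ref{wermer1} as a special case of Theorem \ref{pointest} and Corollary \ref{cor}, which require no real-analyticity at all. The key step, starting from the representing measure $\mu_z$ that you construct but never use, is to test (\ref{eq:LF}) against $a=z^n$, $b=0$ and against $a=0$, $b=z^n$ and apply the F.~and M.~Riesz theorem twice: this produces $k,\ell\in H^1$ with $k(0)=1$, $\ell(0)=\phi(z)$ and $\phi\,k^*=\ell^*$ a.e.\ on $\mathbb{T}$, so that $\phi$ is a.e.\ the boundary value of the single meromorphic function $h=\ell/k$ (a quotient of bounded holomorphic functions) with $h(z)=\phi(z)$. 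Since $h$ is determined by its boundary values, letting $z$ range over $\mathbb{D}$ gives $\phi=h$ everywhere, and continuity of $\phi$ rules out poles. If you wish to salvage your approach, your annulus computation does identify $h$ with $\tilde\phi$ near $\mathbb{T}$, but you still need the F.~and M.~Riesz step (or an equivalent) to know that a global meromorphic extension of the boundary data through $\{|z|\le 1/R\}$ exists.
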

In section \ref{modules} we will strengthen Wermer's result (see Theorem \ref{pointest}, Corollary \ref{cor}, and Remark \ref{discrete}) to give a characterization of functions meromorphic in the disk, while also relaxing the requirement that $\phi$ be real-analytic on the circle.

Wermer's hypothesis (\ref{eq:wermermax}) was inspired by the concept of the \emph{projective hull} $\hat{K}$ of a compact subset $K$ of projective space $\mathbb{P}^n$ introduced by Harvey and Lawson in \cite{HL}.  For a compact subset $K$ of an affine coordinate chart $\mathbb{C}^n$ the intersection of $\hat{K}$ with $\mathbb{C}^n$ is the set of points $z \in \mathbb{C}^n$ such that there exists a constant $C_{z}$ with
\begin{equation} |p(z)| \leq C_{z}^{ \deg{p}}\|p\|_{K} \label{eq:projhull} \end{equation}
for all polynomials $p$ (see \cite{HL}, section 6).  For compact subsets $K$ of $\mathbb{C}^n$, the projective hull is thus a generalization of the polynomially convex hull $\hat{K}_{P}$, defined as the set of points $z \in \mathbb{C}^n$ for which (\ref{eq:projhull}) holds with $C_{z} = 1$.

Harvey and Lawson were in turn motivated by the desire to find an appropriate generalization for curves in projective space of a celebrated theorem of Wermer \cite{W1}: if $\gamma \subset \mathbb{C}^n$ is a closed real-analytic curve, and $\hat{\gamma}_{P} \neq \gamma$, then $\hat{\gamma}_{P} \setminus \gamma$ is a one-dimensional complex-analytic variety.  This result was later extended by various authors to smooth and even merely rectifiable curves.  However, the analogue of Wermer's theorem for projective hulls may fail spectacularly if $\gamma$ is smooth but not real-analytic: Diederich and Fornaess \cite{DF} constructed a $C^{\infty}$ curve $\gamma \subset \mathbb{C}^2$ such that $\gamma$ is not pluripolar, and (see section 3) this is equivalent to the statement that $\hat{\gamma} = \mathbb{P}^2$.

Wermer also considered characterizations of functions meromorphic in the punctured disk $\mathbb{D}^* := \mathbb{D} \setminus \{0\}$ in terms of projective hulls of graphs.  If $\phi \in C(\mathbb{D}^* \cup \mathbb{T})$ we set
\[ \Sigma := \{ (z,\phi(z)) : z \in \mathbb{D}^* \} \]
and \[ \gamma := \{ (z, \phi(z)) : z \in \mathbb{T} \}. \] Wermer showed (\cite{W2}, Proposition 3)
\begin{theorem} \label{wermer2} If $\phi \in C(\mathbb{D}^* \cup \mathbb{T})$ is meromorphic in $\mathbb{D}$, then $\Sigma \subset \hat{\gamma}$. \end{theorem}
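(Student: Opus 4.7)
The plan is to reduce to an application of the classical one-variable maximum principle on the disk algebra $A(\mathbb{D})$. The starting observation is that since $\phi$ is continuous (hence finite-valued) on $\mathbb{D}^*$ and meromorphic on $\mathbb{D}$, the only possible pole of $\phi$ is at the origin. Thus there is an integer $n \geq 0$ such that $f(z) := z^{n}\phi(z)$ extends to be holomorphic in $\mathbb{D}$ and continuous on $\overline{\mathbb{D}}$; that is, $f \in A(\mathbb{D})$.

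Fix $z_{0} \in \mathbb{D}^*$. To establish $(z_{0},\phi(z_{0})) \in \hat{\gamma}$, I need to exhibit a constant $C = C_{z_{0}}$ such that $|p(z_{0},\phi(z_{0}))| \leq C^{\deg p}\|p\|_{\gamma}$ for every polynomial $p(z,w)$. Given such a $p$ of total degree $d$, write $p(z,w)= \sum_{i+j\leq d} a_{ij}z^{i}w^{j}$ and consider the auxiliary function
\[
q(z) := z^{nd}\, p(z,\phi(z)) = \sum_{i+j\leq d} a_{ij}\, z^{\,i+n(d-j)}\, f(z)^{j}.
\]
Because each exponent $i+n(d-j)$ is nonnegative and $f \in A(\mathbb{D})$, each summand lies in $A(\mathbb{D})$; hence $q \in A(\mathbb{D})$.

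Now apply the maximum principle to $q$. On $\mathbb{T}$ the factor $|z^{nd}|$ equals $1$, so
\[
|z_{0}|^{nd}\,|p(z_{0},\phi(z_{0}))| = |q(z_{0})| \leq \|q\|_{\mathbb{T}} = \max_{z \in \mathbb{T}} |p(z,\phi(z))| = \|p\|_{\gamma}.
\]
Dividing by $|z_{0}|^{nd}$ gives $|p(z_{0},\phi(z_{0}))| \leq \bigl(|z_{0}|^{-n}\bigr)^{d}\|p\|_{\gamma}$, which is exactly the projective hull estimate (\ref{eq:projhull}) with $C_{z_{0}} = |z_{0}|^{-n}$ (independent of $p$). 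Since $z_{0} \in \mathbb{D}^*$ was arbitrary, $\Sigma \subset \hat{\gamma}$.

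There isn't really a hard step here once the structural remark about the pole is made: the whole argument hinges on spotting that multiplying through by $z^{nd}$ clears all denominators simultaneously while being harmless on $\mathbb{T}$, converting the two-variable estimate on $\gamma$ into a one-variable maximum-modulus estimate on a genuine disk-algebra function. The only point to watch is the uniformity of the exponent: the factor of $|z_0|^{-n}$ must appear to the power $d = \deg p$ (not some larger power depending separately on $i$ and $j$), and this is ensured by using $z^{nd}$ rather than $z^{nj}$ term by term.
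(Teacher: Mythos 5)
Your argument is correct. Note that the paper does not actually prove Theorem \ref{wermer2}; it cites it as Wermer's Proposition~3 in \cite{W2}, and your proof is essentially the standard (and Wermer's) argument: since continuity on $\mathbb{D}^*$ forces the only pole to sit at the origin, multiplying $p(z,\phi(z))$ by $z^{n\deg p}$ produces a disk-algebra function, is harmless on $\mathbb{T}$, and yields the bound $|p(z_0,\phi(z_0))|\leq(|z_0|^{-n})^{\deg p}\|p\|_{\gamma}$ required by (\ref{eq:hatgamma}). Your closing remark about keeping the exponent equal to $\deg p$ (rather than something depending on $i,j$ separately) correctly identifies the one point where care is needed.
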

Note that the condition $\Sigma \subset \hat{\gamma}$ is equivalent to the statement that for each $z \in \mathbb{D}^*$ there exists a constant $C_{z}$ such that \begin{equation} |p(z,\phi(z))| \leq C_{z}^{\deg p} \|p\|_{\gamma} \label{eq:hatgamma} \end{equation}
for all polynomials $p$ in two variables, while the hypothesis (\ref{eq:wermermax}) in Theorem \ref{wermer1} is equivalent to the existence of a constant $C_{z}$ such that
\[ |p(z,\phi(z))| \leq C_{z}\|p\|_{\gamma} \]
for all polynomials $p(z,w)$ of degree at most one in $w$ and of arbitrary degree in $z$.

In section \ref{graphs} we will consider the question of a converse to Theorem \ref{wermer2}.  The existence (noted above) of smooth curves with projective hull equal to $\mathbb{P}^2$ makes it necessary to place some restriction on the curve $\gamma$.   Wermer in \cite{W2} posed the question: {\em Suppose $\phi \in C(\mathbb{D}^* \cup \mathbb{T})$ and $\phi$ is real-analytic on $\mathbb{T}$.  If $\Sigma \subset \hat{\gamma}$, is $\phi$ in fact meromorphic in $\mathbb{D}$?}
  Under a much stronger assumption on $\phi$ (namely, that $\phi$ is in fact real-analytic on all of $\mathbb{C}^2$), Wermer showed that the answer is yes (in this case the conclusion is that $\phi$ is in fact an entire function).

 We will show in section 3 (Theorem \ref{holomorphic}) that if $\phi \in C(\mathbb{D}^* \cup \mathbb{T})$ and $\gamma$ is pluripolar, then using a deep result of Shcherbina it is easy to conclude that $\Sigma \subset \hat{\gamma}$ implies that $\phi$ is holomorphic in $\mathbb{D}^*$.  In order to show that $\phi$ has at most a pole at the origin, we will make a stronger assumption on the projective hull (Theorem \ref{meromorphic}): we assume that $\Sigma = \hat{\gamma} \cap (\mathbb{D}^* \times \mathbb{C})$.  We do not know if this stronger assumption is necessary.

 Finally, we note that Josip Globevnik has recently given in \cite{G} a characterization of meromorphic functions that bears some resemblance to the Wermer/Rudin results but postulates a condition on winding numbers: $\phi \in C(\mathbb{T})$ extends meromorphically to $\mathbb{D}$ iff there exists $N \in \mathbb{N} \cup \{0\}$ such that for all $f$ of the form $a + b\phi$ with $a,b \in A(\mathbb{D})$ the change in the argument of $f$ around $\mathbb{T}$ is bounded by $-2\pi N$.  We are indebted to Dmitry Khavinson for pointing out Globevnik's work to us.

\section{Modules over the disk algebra} \label{modules}

\noindent We will denote by $H^{p}$ the Hardy space consisting  of functions holomorphic in the disk and satisfying (if $0 < p < \infty$)
\[ \|f\|_{p}^{p} := \sup_{r < 1}\frac{1}{2\pi} \int_{0}^{2\pi} |f(re^{i\theta})|^{p} d\theta  < \infty,  \]
while $H^{\infty}$ will denote the space of functions bounded and holomorphic in the disk, with $\|f\|_{\infty} = \sup\{ |f(z)|: |z| < 1 \}$.  For $\zeta \in\mathbb{T}$ and $\alpha > 1$ we let $\Gamma_{\alpha}(\zeta)$ be the nontangential approach region \[ \Gamma_{\alpha}(\zeta) := \{ z \in \mathbb{D} : |z - \zeta| < \alpha(1 -  |z|)\}. \]  Then for a function $f \in H^{p}$, $0 < p \leq \infty$, the nontangential limit $f^{*}(\zeta) = \lim_{z \in \Gamma_{\alpha}(\zeta) \rightarrow \zeta} f(z)$ exists for almost every $\zeta \in \mathbb{T}$ and every $\alpha > 1$.  Moreover $f^{*} \in L^{p}(\mathbb{T})$ with respect to normalized Lebesgue measure $d\theta/2\pi$ on the circle and $\|f\|_{p} = \|f^*\|_{L^{p}(\mathbb{T})}$, for $0 < p \leq \infty$.

We first give a condition equivalent to the existence of the inequality (\ref{eq:wermermax}) at a single point, assuming only that $\phi \in C(\mathbb{T})$.

\begin{theorem} \label{pointest} If $\phi \in C(\mathbb{T})$, $z \in \mathbb{D}$, and $\lambda \in \mathbb{C}$, then the following are equivalent:
\begin{enumerate}
  \item There exists a constant $C$ such that $|a(z) + b(z)\lambda| \leq C\|a + b\phi\|_{\mathbb{T}}$ for all $a,b \in A(\mathbb{D})$;
  \item There exist bounded holomorphic functions $f,g$ in $\mathbb{D}$ such that $h = f/g$ satisfies $h(z) = \lambda$ and $h^{*} = \phi$ a.e. on $\mathbb{T}$.
\end{enumerate}
\end{theorem}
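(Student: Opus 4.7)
The easier direction $(2) \Rightarrow (1)$ is almost immediate. Normalize so that $f$ and $g$ share no common inner factor; since $\lambda \in \mathbb{C}$ means $z$ is not a pole of $h$, this forces $g(z) \neq 0$. On $\mathbb{T}$ the identity $(a + b\phi)g = ag + bf$ exhibits $(a + b\phi)g$ as the boundary value of the $H^\infty$ function $F := ag + bf$, and the maximum principle gives $|F(z)| \leq \|F^*\|_{L^\infty(\mathbb{T})} \leq \|g\|_\infty\|a + b\phi\|_{\mathbb{T}}$. Since $F(z) = g(z)(a(z) + b(z)\lambda)$, rearranging yields (1) with $C = \|g\|_\infty / |g(z)|$.

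For the harder direction $(1) \Rightarrow (2)$, my plan is to combine Hahn-Banach, the F.\ and M.\ Riesz theorem, and an outer-function trick. Hypothesis (1) makes $a + b\phi \mapsto a(z) + b(z)\lambda$ a bounded linear functional on $\{a + b\phi : a,b \in A(\mathbb{D})\} \subset C(\mathbb{T})$. Extending by Hahn-Banach and representing via Riesz yields a complex Borel measure $\mu$ on $\mathbb{T}$ with $\int(a + b\phi)\,d\mu = a(z) + b(z)\lambda$. Setting $b = 0$ shows that $\mu$ is a representing measure for evaluation at $z$ on $A(\mathbb{D})$, while combining both identities shows $(\phi - \lambda)\mu \perp A(\mathbb{D})$. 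The F.\ and M.\ Riesz theorem then forces $\mu$ to be absolutely continuous---$d\mu = k\,d\theta/2\pi$ with $k \in L^1(\mathbb{T})$---and forces $(\phi - \lambda)k$ to be the boundary value of some $h_2 \in H^1$ with $h_2(0) = 0$.

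The main obstacle is to promote this $L^1$-level relation to an $H^\infty$ one. Since $k$ and the Cauchy kernel $K_z(\zeta) = \zeta/(\zeta-z)$ are both representing densities at $z$, the difference $h_1 := k - K_z$ lies in $H^1$ and vanishes at the origin. Multiplying the identity $(\phi - \lambda)(K_z + h_1) = h_2$ through by $(\zeta - z)$ clears the pole of $K_z$ and gives
\[
(\phi - \lambda)\bigl[\zeta + (\zeta - z)h_1\bigr] = (\zeta - z)h_2.
\]
Setting $g_0 := \zeta + (\zeta - z)h_1 \in H^1$ (with the variant $g_0 := 1 + h_1$ when $z = 0$), one has $g_0(z) = z \neq 0$, and the displayed identity exhibits $(\phi - \lambda)g_0 \in H^1$ vanishing at $z$. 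To promote $g_0$ to $H^\infty$, let $w \in H^\infty$ be the outer function with $|w| = 1/(1 + |g_0|)$ on $\mathbb{T}$, which exists because $\log(1 + |g_0|) \leq |g_0| \in L^1(\mathbb{T})$. Then $g := g_0 w$ is in $H^1$ and bounded by $1$ on $\mathbb{T}$, hence in $H^\infty$; $g(z) \neq 0$ because $w$ is zero-free in $\mathbb{D}$; and $(\phi - \lambda)g$ is both $H^1$ and bounded on $\mathbb{T}$, hence in $H^\infty$, while still vanishing at $z$. Finally $f := \phi g = (\phi - \lambda)g + \lambda g \in H^\infty$, and the quotient $h := f/g$ satisfies $h(z) = \lambda$ and, using that $g^* \neq 0$ a.e., $h^* = \phi$ a.e.\ on $\mathbb{T}$, establishing (2).
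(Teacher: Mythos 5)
Your proof is correct and follows essentially the same route as the paper: the forward direction via a dual/annihilator argument (Hahn--Banach, Riesz representation, F.\ and M.\ Riesz applied twice) and the reverse direction via the maximum principle applied to $ag+bf$. The only differences are cosmetic --- you work at a general $z$ using the Cauchy kernel rather than normalizing to $z=0$ by an automorphism, and you unpack the paper's citation that every $H^1$ (indeed Nevanlinna-class) function is a quotient of bounded functions into an explicit outer-function construction; you are in fact slightly more careful than the paper in justifying $g(z)\neq 0$ in the direction $(2)\Rightarrow(1)$.
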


\begin{proof} First, suppose that (1) holds. Since composition with a fixed element of the holomorphic automorphism group of the disk preserves $A(\mathbb{D})$ and is a norm-preserving map of $C(\mathbb{T})$ to itself, we may assume that $z = 0$.  Let \[ \mathfrak{M} = \{ a + b\phi : a,b \in A(\mathbb{D})\}. \] Restricting functions in $\mathfrak{M}$ to the circle, we may consider $\mathfrak{M}$ as a subspace of $C(\mathbb{T})$.  By (1) the linear functional given by $a + b\phi \mapsto a(0) + b(0)\lambda$ is bounded on $\mathfrak{M}$, and therefore by the Hahn-Banach theorem extends to a continuous linear functional on $C(\mathbb{T})$.  By the Riesz representation theorem there exists a complex measure $\mu$ supported on $\mathbb{T}$ representing this linear functional, i.e.,
\begin{equation} \int_{\mathbb{T}} (a + b\phi)\, d\mu = a(0) + b(0)\lambda \label{eq:LF} \end{equation}
for all $a,b \in A(\mathbb{D})$.  Taking  $b \equiv 0$ and $a(z) = z^n$ in (\ref{eq:LF}) we find that for all $n \geq 1$,
\[ \int_{\mathbb{T}} z^{n}  d\mu(z)  = 0, \]
while for $n = 0$ we have $\int_{\mathbb{T}} d\mu = 1$.  Thus the measure $\nu$ defined by $d\nu = d\mu - d\theta/2\pi$ has the property that
\[ \int_{\mathbb{T}} z^{n} d\nu(z) = 0 \]
for all $n \geq 0$.  By the F. and M. Riesz theorem, there exists a function $k \in H^{1}(\mathbb{D})$ with $k(0) = 0$ such that $d\nu = k^{*}d\theta/2\pi$.  Solving for $d\mu$ and replacing $k$ by $k + 1$, we find
\begin{equation} d\mu = k^*d\theta/2\pi \label{eq:mu} \end{equation} with $k \in H^{1}$ and $k(0) = 1$.

Next, in (\ref{eq:LF}) we take $a \equiv 0$ and $b(z) = z^n$ to obtain for $n \geq 1$
\[ \int_{\mathbb{T}} z^n \phi(z) \, d\mu = 0, \]
while for $n = 0$ we have $\int_{\mathbb{T}} \phi \, d\mu = \lambda$.  Arguing as above with $\mu$ replaced by $\phi \, d\mu$ we obtain a function $\ell \in H^{1}$ with $\ell(0) = \lambda$ and \begin{equation} \phi \, d\mu = \ell^* d\theta/2\pi. \label{eq:phimu} \end{equation}
Multiplying both sides of (\ref{eq:mu}) by $\phi$ and comparing with (\ref{eq:phimu}) we see that $\phi k^* = \ell^{*}$ a.e. on $\mathbb{T}$.  Since $k(0) = 1$, $k$ is not identically zero, so $k^*$ is non-zero a.e. on $\mathbb{T}$. Thus if we set $h = \ell/k$,
  \[ \phi = h^{*}  \]
  a.e. on $\mathbb{T}$, and $h(0) = \ell(0)/k(0) = \lambda$.
  Since every function in $H^{1}$ can be represented as the quotient of two functions holomorphic and bounded in $\mathbb{D}$ (this is true more generally for functions in the Nevanlinna class - see \cite{Du}, Theorem 2.1), the same is true for $h$.  Thus we may write $h = f/g$ with $f,g$ bounded, establishing (2).

Now suppose that (2) holds, so that $h = f/g$ with $f,g$ bounded and holomorphic in $\mathbb{D}$, $h(z) = \lambda$, and $h^{*} = \phi$ a.e in $\mathbb{T}$.  Note $g(z) \neq 0$.  Given $a,b \in A(\mathbb{D})$, we have
\begin{eqnarray*}  |a(z) + b(z)\lambda| &  =  & |a(z) + b(z)f(z)/g(z)|  \\ &  = & \frac{1}{|g(z)|} \cdot |g(z)a(z) + b(z)f(z)| \\
& = & \frac{1}{|g(z)|} \|ga + bf\|_{\infty} \\ & \leq & \frac{1}{|g(z)|} \cdot \|g^{*}\|_{L^\infty(\mathbb{T})} \cdot \|a + bf^{*}/g^{*}\|_{L^\infty(\mathbb{T})} \\ & & \\ & = & C\|a + b\phi\|_{\mathbb{T}}\end{eqnarray*}
taking $C = C_{z} = \|g^{*}\|_{L^{\infty}(\mathbb{T})}/|g(z)|$.  Here we have used the facts that $ga + bf \in H^{\infty}$ and that $f^{*}/g^{*}$ agrees a.e. on $\mathbb{T}$ with the continuous function $\phi$.  This shows that (1) holds, and completes the proof of Theorem \ref{pointest}.
\end{proof}

\begin{remark} Condition (2) can be interpreted as saying that there is a meromorphic function $h$ whose graph over the disk passes through the point $(z,\lambda) \in \mathbb{C}^2$ and whose ``boundary'' lies on the graph $\gamma$ of $\phi$ over $\mathbb{T}$, in the sense that $(e^{i\theta},h^*(e^{i\theta})) \in \gamma$ for almost all $\theta$.
\end{remark}

\begin{corollary} \label{cor} Let $\Omega \subset \mathbb{D}$ be open, and assume $\phi \in C(\Omega \cup \mathbb{T})$.  Suppose that for each $z \in \Omega$ there exists a constant $C_{z}$ such that for all $a,b \in A(\mathbb{D})$,
\[ |a(z) + b(z)\phi(z)| \leq C_{z}\|a +b\phi\|_{\mathbb{T}}. \]
 Then there exist $f,g \in H^{\infty}$ such that $h = f/g$ satisfies $h^{*} = \phi$ a.e. on $\mathbb{T}$ and $h = \phi$ on $\Omega$.  In particular, $\phi$ is holomorphic in $\Omega$ and extends to be meromorphic in $\mathbb{D}$.
\end{corollary}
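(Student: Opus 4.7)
The plan is to apply Theorem \ref{pointest} pointwise on $\Omega$ to produce, at each $z \in \Omega$, a Nevanlinna-class candidate $h_z$ for the desired meromorphic extension, and then use boundary uniqueness to show that all these candidates coincide.

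Concretely, for each fixed $z \in \Omega$, the hypothesis of the corollary with $\lambda = \phi(z)$ is exactly condition (1) of Theorem \ref{pointest}. Applying that theorem yields $f_z, g_z \in H^\infty$, with $g_z \not\equiv 0$, such that $h_z := f_z/g_z$ satisfies $h_z(z) = \phi(z)$ and $h_z^{*} = \phi$ a.e. on $\mathbb{T}$. Since a non-zero $H^\infty$ function has nontangential limit zero only on a set of measure zero, $g_z^{*} \neq 0$ a.e., and the identity $h_z^{*} = \phi$ is equivalent to $f_z^{*} = \phi \, g_z^{*}$ a.e. on $\mathbb{T}$.

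The central step is to check that these meromorphic functions agree. Fix $z_0 \in \Omega$, set $h := h_{z_0}$, $f := f_{z_0}$, $g := g_{z_0}$, and for any other $z \in \Omega$ form the bounded holomorphic function $F := f\, g_z - f_z\, g \in H^\infty$. Its nontangential boundary values are
$f^{*} g_z^{*} - f_z^{*} g^{*} = \phi\, g^{*} g_z^{*} - \phi\, g_z^{*} g^{*} = 0$
a.e. on $\mathbb{T}$, so by the classical boundary uniqueness theorem for $H^\infty$ functions, $F \equiv 0$. Hence $h_z = f_z/g_z = f/g = h$ as meromorphic functions on $\mathbb{D}$.

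It follows that $h(z) = h_z(z) = \phi(z)$ for every $z \in \Omega$. Because $\phi$ is continuous and hence finite-valued on $\Omega$, $h$ has no poles in $\Omega$, so it is in fact holomorphic there and coincides with $\phi$, while $f, g \in H^\infty$ witness the meromorphic extension of $\phi$ to all of $\mathbb{D}$. The main (and rather mild) obstacle is the uniqueness step above, which however reduces immediately to the fact that a non-zero $H^\infty$ function cannot have nontangential boundary value zero on a set of positive measure; the rest of the argument is bookkeeping built on Theorem \ref{pointest}.
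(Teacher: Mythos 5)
Your proof is correct and follows essentially the same route as the paper's: apply Theorem \ref{pointest} at each point of $\Omega$ and then observe that the condition $h^{*}=\phi$ a.e.\ on $\mathbb{T}$ determines $h$ uniquely among quotients of $H^{\infty}$ functions. The only difference is that you spell out the uniqueness step (via the boundary uniqueness theorem applied to $f g_z - f_z g$), which the paper simply asserts.
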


\begin{proof} By Theorem \ref{pointest}, for each fixed $z \in \Omega$, there exist $f,g \in H^{\infty}$ such that if $h = f/g$, we have $h^* = \phi$ a.e. on $\mathbb{T}$ and $h(z) = \phi(z)$.  The functions $f,g$ and $h$ depend \emph{a priori} on the point $z$, but note that the condition $h^* = \phi$ a.e. uniquely determines $h$: if $h_{1}, h_{2}$ are both quotients of $H^{\infty}$ functions and $h_{1}^{*} = h_{2}^{*}$ a.e on $\mathbb{T}$, then $h_{1} = h_{2}$.     We therefore obtain a single function $h$, and (we may choose) a single pair of bounded functions $f,g$, so that $h = f/g$ on $\mathbb{D}$ and $h(z) = \phi(z)$ for all $z \in \Omega$.  Thus both $\phi$ and $h$ are in fact holomorphic in $\Omega$, and $\phi$ extends meromorphically (as $h$) to $\mathbb{D}$.
\end{proof}

\begin{remark} \label{discrete} Let $F$ be a discrete subset of $\mathbb{D}$, and $\Omega = \mathbb{D} \setminus F$.  Corollary \ref{cor} then gives a characterization of functions meromorphic in $\mathbb{D}$ whose poles are contained in the set $F$.  In particular, taking $F$ to be the empty set, we obtain a generalization of Wermer's result Theorem \ref{wermer1} in which the function $\phi$ is not required to be real-analytic on $\mathbb{T}$.
\end{remark}

\section{Projective hulls of Graphs} \label{graphs}

\noindent In this section we consider the possibility of a converse to Theorem \ref{wermer2}.  Our notation is as in the Introduction: $\mathbb{D}^* = \mathbb{D} \setminus \{0\}$, we fix a function $\phi$ continuous on $\mathbb{D}^{*} \cup \mathbb{T}$, and $\Sigma, \gamma$ are the graphs of $\phi$ over $\mathbb{D}^*$ and $\mathbb{T}$, respectively.

We begin with some necessary preliminaries from pluripotential theory.  Given a domain $\Omega \subset \mathbb{C}^n$, we let $\mbox{PSH}(\Omega)$ be the set of functions plurisubharmonic in $\Omega$, i.e., the set of upper semi-continuous functions $u : \Omega \rightarrow [-\infty, \infty)$ which are subharmonic when restricted to (each component of) $L \cap \Omega$, for any complex line $L$ in $\mathbb{C}^n$.   If $g$ is a function holomorphic in $\Omega$, then $\log|g| \in \mbox{PSH}(\Omega)$.  We will use the fact that the maximum of a finite subset of $\mbox{PSH}(\Omega)$ is also plurisubharmonic in $\Omega$.

For a compact set $K \subset \mathbb{C}^n$, set \[ V_{K}(z) = \sup \{ \frac{1}{\deg(p)}\log|p(z)|: p \mbox{ is a polynomial with } \|p\|_{K} \leq 1 \}. \]
It is clear from the characterization (\ref{eq:projhull}) of the projective hull that
\[ \hat{K} \cap \mathbb{C}^n = \{ z: V_{K}(z) < \infty \}, \] and that if $z \in \hat{K} \cap \mathbb{C}^n$, then $V_K(z)=\log C_z$, where $C_z$ is the least constant such that (\ref{eq:projhull}) holds.  Each of the functions $u = \log|p|/\deg{p}$ appearing in the definition of $V_{K}$ satisfies $u \leq 0$ on $K$ and is easily seen to be a member of the Lelong class $L(\mathbb{C}^n)$ of plurisubharmonic functions of logarithmic growth defined by
\begin{equation} L(\mathbb{C}^n) = \{ u \in \mbox{PSH}(\mathbb{C}^n):  u(z) \leq \log(1 + |z|) + c, \forall z \in \mathbb{C}^n \} \label{eq:lelong} \end{equation}
where in $(\ref{eq:lelong})$ the constant $c$ is allowed to depend on the function $u$.  For $K$ compact we may also obtain (see \cite{Si} or \cite{Z}) the extremal function $V_{K}$ by taking the supremum over the larger class:
\[ V_{K}(z) = \sup \{ u(z): u \in L(\mathbb{C}^n), u \leq 0 \mbox{ on } K \}. \]
Note that $V_{K} \geq 0$, and thus, by replacing each function $\log|p|/\deg{p}$ by $\max\{ 0, \log|p|/\deg{p} \}$ we may realize $V_{K}(z)$ as an upper envelope of \emph{continuous} functions in the Lelong class.

A set $E \subset \mathbb{C}^n$ is said to be {\em pluripolar} if there exists a function $u$ which is plurisubharmonic  on $\mathbb{C}^n$ and not identically equal to $-\infty$, such that $E \subset \{ z \in \mathbb{C}^n: u(z) = -\infty \}$.  By a result of Josefson \cite{J}, this is equivalent to the statement that $E$ is \emph{locally pluripolar}, i.e.,  for each $z_{0} \in E$, there is an open neighborhood $U$ of $z_{0}$ and $u \in \mbox{PSH}(U)$ with $E \cap U \subset \{ z \in U: u(z) = -\infty \}$.  It follows that any complex analytic subvariety of an open subset of $\mathbb{C}^n$ is pluripolar.

 Using a result of Guedj and Zeriahi \cite{GZ}, Harvey and Lawson prove (\cite{HL}, Corollary 4.4) the following fundamental relationship between  pluripolarity and the projective hull:
\begin{proposition} \label{hulls} For a compact set $K \subset \mathbb{P}^n$, \[  K \mbox{ is pluripolar } \Leftrightarrow \hat{K} \mbox{ is pluripolar } \Leftrightarrow \hat{K} \neq \mathbb{P}^n. \]
\end{proposition}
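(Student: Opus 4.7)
The plan is to close the cycle $(a) \Leftrightarrow (b) \Leftrightarrow (c)$. The implications $(b) \Rightarrow (a)$ and $(b) \Rightarrow (c)$ are immediate: the first because $K \subset \hat{K}$ and subsets of pluripolar sets are pluripolar; the second because $\mathbb{P}^n$, being a complex manifold, is locally non-pluripolar (any plurisubharmonic function equal to $-\infty$ on a nonempty open subset of $\mathbb{C}^n$ is identically $-\infty$ on its component). Thus pluripolarity of $\hat{K}$ forces $\hat{K} \neq \mathbb{P}^n$.

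For the nontrivial implications the main tool is the Siciak extremal function $V_K$, together with the identity $\hat{K} \cap \mathbb{C}^n = \{z : V_K(z) < \infty\}$ already recorded in the preceding paragraphs. I would invoke the Siciak-Zaharjuta dichotomy for compact $K \subset \mathbb{C}^n$: either $V_K^*$ (the upper-semicontinuous regularization of $V_K$) belongs to the Lelong class $L(\mathbb{C}^n)$ and is thus finite at every point of $\mathbb{C}^n$, or $V_K^* \equiv +\infty$; the latter alternative is equivalent to $K$ being pluripolar.

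For $(a) \Rightarrow (b)$, assume $K$ is pluripolar. Then $V_K^* \equiv +\infty$, and since $V_K \leq V_K^*$ pointwise we have $\{V_K < \infty\} \subset \{V_K < V_K^*\}$; the latter is pluripolar by a Bedford-Taylor-style negligibility result applied to the family $\{\log|p|/\deg p\}$ of plurisubharmonic functions whose supremum defines $V_K$. This shows $\hat{K} \cap \mathbb{C}^n$ is pluripolar; covering $\mathbb{P}^n$ by finitely many affine charts and applying Josefson's theorem upgrades this to pluripolarity of $\hat{K}$ in $\mathbb{P}^n$. For $(c) \Rightarrow (a)$, I argue the contrapositive: if $K$ is non-pluripolar then $V_K^* \in L(\mathbb{C}^n)$, so $V_K(z) \leq V_K^*(z) < \infty$ at every point. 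Hence $\hat{K} \cap \mathbb{C}^n = \mathbb{C}^n$, and since $\hat{K}$ is closed in $\mathbb{P}^n$ (by Harvey-Lawson's construction) and $\mathbb{C}^n$ is dense in $\mathbb{P}^n$, we obtain $\hat{K} = \mathbb{P}^n$.

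The main obstacle is the negligibility step in $(a) \Rightarrow (b)$: when $K$ is pluripolar, $V_K$ itself need not be locally bounded above, so the textbook Bedford-Taylor negligibility theorem (which assumes a locally bounded upper envelope) does not apply verbatim. The generalization needed for extremal functions of pluripolar sets is precisely where the deep result of Guedj and Zeriahi enters the Harvey-Lawson proof of Corollary 4.4; absent that, one would have to reprove it by writing $V_K$ as a countable supremum of plurisubharmonic functions and performing a truncation and exhaustion argument. A secondary point deserving care is the passage between pluripolarity in each affine chart and pluripolarity in $\mathbb{P}^n$, and the symmetric use of closedness of $\hat{K}$ together with density of $\mathbb{C}^n$ to extend the $(c) \Rightarrow (a)$ conclusion across the hyperplane at infinity.
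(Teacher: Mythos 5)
The paper does not actually prove this proposition: it is quoted verbatim from Harvey--Lawson (\cite{HL}, Corollary 4.4), whose proof rests on the Guedj--Zeriahi machinery. So the relevant question is whether your sketch is a sound reconstruction of that argument. Its architecture is right: the two implications out of ``$\hat K$ pluripolar'' are trivial, the dichotomy for $V_K^*$ (identically $+\infty$ iff $K$ is pluripolar, otherwise in $L(\mathbb{C}^n)$ and hence finite since $V_K^*\ge 0$) drives both nontrivial directions, and you correctly locate the genuine depth in the negligibility of $\{V_K<V_K^*\}$ when the defining family is not locally bounded above --- which is exactly the point at which \cite{GZ} enters \cite{HL}. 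Flagging that and citing it is acceptable here, since the paper itself treats the whole proposition as a black box.

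Two points need repair. First, your step ``$\hat K$ is closed in $\mathbb{P}^n$ by Harvey--Lawson's construction'' is not available: $\hat K$ is a union over $C>0$ of the closed sets $\{x:\|\sigma(x)\|\le C^{d}\|\sigma\|_K \text{ for all } \sigma\in H^0(\mathcal{O}(d)),\ d\ge 1\}$, and whether this union is closed (equivalently, whether $\hat K$ is compact) is posed as an open problem in \cite{HL}. The correct way to cross the hyperplane at infinity in $(c)\Rightarrow(a)$ is to use the \emph{uniformity} of the constant: $V_K^*\in L(\mathbb{C}^n)$ gives a single $c$ with $|p(z)|\le e^{c\deg p}(1+|z|)^{\deg p}\|p\|_K$ on all of $\mathbb{C}^n$, and for fixed $C=e^{c}$ the set where the section inequality holds \emph{is} closed, hence equals $\mathbb{P}^n$ by density of $\mathbb{C}^n$. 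Second, your argument is carried out entirely in one affine chart, which presumes $K\subset\mathbb{C}^n$ (this suffices for the paper's application to $\gamma\subset\mathbb{C}^2$, but not for the proposition as stated for arbitrary compact $K\subset\mathbb{P}^n$; the general case needs the intrinsic $\omega$-plurisubharmonic extremal function of \cite{GZ}). Relatedly, in $(a)\Rightarrow(b)$ you only control $\hat K\cap\mathbb{C}^n$; the cleanest finish is not a chart-by-chart Josefson argument but the observation that $\hat K\cap H_\infty$ is contained in the hypersurface $H_\infty$, which is pluripolar, and a finite union of pluripolar sets is pluripolar.
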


If $\gamma \subset \mathbb{C}^n$ is a real-analytic curve, then $\gamma$ is locally contained in a complex analytic variety, and so is pluripolar.
However there are (as we noted in the introduction) smooth curves $\gamma \subset \mathbb{C}^2$ with $\gamma$ not pluripolar, implying $\hat{\gamma} = \mathbb{P}^2$.  (The paper \cite{CLP} explores conditions under which a smooth curve is pluripolar.) It is therefore natural, in considering the converse of Theorem \ref{wermer2}, to impose the condition that $\gamma$ is pluripolar.  With this assumption we have:

\begin{proposition} \label{holomorphic} Suppose $\phi \in C(\mathbb{D}^* \cup \mathbb{T})$ and that $\gamma$ is pluripolar.  If $\Sigma \subset \hat{\gamma}$, then $\phi$ is holomorphic in $\mathbb{D}^*$.  \end{proposition}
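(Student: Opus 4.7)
The plan is to combine Proposition \ref{hulls} with Shcherbina's theorem on pluripolar graphs, both of which do the heavy lifting externally. The argument is quite short once those inputs are in place.

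First I would argue that $\Sigma$ is pluripolar. By hypothesis $\gamma$ is pluripolar, so Proposition \ref{hulls} (applied with $n=2$, viewing $\gamma \subset \mathbb{C}^2 \subset \mathbb{P}^2$) yields that $\hat{\gamma}$ is pluripolar in $\mathbb{P}^2$. Restricting to the affine chart $\mathbb{C}^2$, the set $\hat{\gamma}\cap \mathbb{C}^2$ is pluripolar in $\mathbb{C}^2$: indeed, by Josefson's theorem the pluripolar defining function on $\mathbb{P}^2$ restricts to give a global plurisubharmonic function on $\mathbb{C}^2$ equal to $-\infty$ on $\hat\gamma\cap\mathbb{C}^2$. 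Since $\Sigma \subset \hat{\gamma}\cap\mathbb{C}^2$ by hypothesis, $\Sigma$ is pluripolar.

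Next I would invoke Shcherbina's theorem: if $U \subset \mathbb{C}^n$ is a domain and $\psi \in C(U)$ has a pluripolar graph in $\mathbb{C}^{n+1}$, then $\psi$ is holomorphic on $U$. Applying this with $U = \mathbb{D}^*$ and $\psi = \phi\big|_{\mathbb{D}^*}$, which is continuous by assumption and has pluripolar graph $\Sigma$, we conclude that $\phi$ is holomorphic in $\mathbb{D}^*$.

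The main ``obstacle'' is really just the dependence on Shcherbina's deep result; once cited, the argument reduces to tracking pluripolarity through the inclusion $\Sigma \subset \hat{\gamma}$. A minor bookkeeping point worth checking is the passage from pluripolarity of $\hat\gamma$ in $\mathbb{P}^2$ to pluripolarity of $\Sigma$ in $\mathbb{C}^2$, but this is standard and follows from the local nature of pluripolarity together with Josefson's theorem as indicated above.
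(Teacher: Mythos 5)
Your argument is correct and follows essentially the same route as the paper: apply Proposition \ref{hulls} to conclude that $\hat{\gamma}$ is pluripolar, note that the subset $\Sigma$ is then pluripolar, and invoke Shcherbina's theorem on pluripolar graphs of continuous functions. The extra remark about transferring pluripolarity from $\mathbb{P}^2$ to the affine chart is a reasonable bit of bookkeeping that the paper glosses over, but it does not change the substance of the proof.
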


\begin{proof} A result of Shcherbina \cite{Sh} states that if $\Omega$ is a domain in $\mathbb{C}^n$, and $f \in C(\Omega)$, then the graph of $f$ over $\Omega$ is pluripolar if and only if $f$ is holomorphic in $\Omega$.  Our assumption on $\gamma$ implies that $\hat{\gamma}$ is a proper subset of ${\bf C}^2$, and so $\gamma$, and therefore $\hat{\gamma}$ is pluripolar. Since
$\Sigma$ is contained in $\hat{\gamma}$, it too is pluripolar. By Shcherbina's result, $\phi$ is holomorphic in $\mathbb{D}^*$. \end{proof}

Given Proposition \ref{holomorphic} and our previous remarks, it is natural to make the following conjecture (cf.
Wermer's question mentioned in the Introduction):

\begin{conjecture}
\label{conj}
If $\gamma$ is pluripolar and $\Sigma \subset \hat{\gamma}$, then $\phi$ is meromorphic in $\mathbb{D}$.
\end{conjecture}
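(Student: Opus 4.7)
By Proposition \ref{holomorphic} we already know $\phi$ is holomorphic on $\mathbb{D}^*$, so the conjecture reduces to ruling out an essential singularity at $z = 0$; equivalently, one must show that $\Sigma$ extends as a complex-analytic $1$-chain into $\mathbb{D} \times \mathbb{P}^1$. My plan is to exploit the Harvey--Lawson picture of the projective hull as the locus that carries positive $(1,1)$-currents with boundary on $\gamma$, and then argue that such a current, together with $\Sigma \subset \hat\gamma$, forces an analytic extension of $\Sigma$ across $0$.

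Concretely, I would proceed in three steps. First, appeal to the Harvey--Lawson theory \cite{HL}: since $\gamma$ is pluripolar, $\hat\gamma \cap \mathbb{C}^2$ is contained in the union of supports of positive $(1,1)$-currents $T$ on $\mathbb{P}^2 \setminus \gamma$ whose boundary $dT$ is supported in $\gamma$, and under pluripolarity such currents should be representable by holomorphic $1$-chains. Second, using $\Sigma \subset \hat\gamma$ together with the fact that $\Sigma$ itself defines a closed positive $(1,1)$-current on $\mathbb{D}^* \times \mathbb{C}$ with boundary on $\gamma$, produce a holomorphic $1$-chain $X$ in $(\mathbb{D} \times \mathbb{P}^1) \setminus \gamma$ that contains $\Sigma$. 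Third, invoke a Remmert--Stein/Bishop-type extension argument to show that the closure of $X$ in $\mathbb{D} \times \mathbb{P}^1$ is again a holomorphic $1$-chain; its intersection with $\{0\} \times \mathbb{P}^1$ is then discrete, which is exactly the statement that $\phi$ extends meromorphically across $0$.

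The main obstacle is the second step: extracting a holomorphic $1$-chain containing $\Sigma$ from the bare hypothesis $\Sigma \subset \hat\gamma$. The Harvey--Lawson current representation is cleanest under additional regularity of $\gamma$ (e.g.\ real-analyticity), and the sharper assumption $\Sigma = \hat\gamma \cap (\mathbb{D}^* \times \mathbb{C})$ used in Theorem \ref{meromorphic} is what circumvents this issue there; in the generality of the conjecture, where $\phi$ is a priori only continuous on $\mathbb{D}^* \cup \mathbb{T}$ and the containment $\Sigma \subset \hat\gamma$ may be strict, producing the required current representation appears to demand genuinely new input. An alternative, more elementary route is to study the envelope $U(z) := V_\gamma(z,\phi(z))$ on $\mathbb{D}^*$: this is finite everywhere by hypothesis and is an upper envelope of subharmonic functions of $z$ (since $\phi$ is holomorphic on $\mathbb{D}^*$), so a quantitative control of the constants $C_z$ from \eqref{eq:projhull} along sequences $z_n \to 0$, combined with the Great Picard theorem to supply preimages of a non-polar set of $w$-values under $\phi$, might be pushed to contradict an essential singularity directly. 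Making either strategy rigorous is, in my view, the principal technical hurdle, and is presumably why the statement is formulated only as a conjecture.
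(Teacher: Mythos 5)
The statement you are addressing is posed in the paper as Conjecture \ref{conj}; the paper offers no proof of it, only the special case Theorem \ref{meromorphic} under the stronger hypothesis $\Sigma = \hat{\gamma} \cap (\mathbb{D}^* \times \mathbb{C})$. Your proposal is likewise not a proof, and you say so; what you have correctly done is locate the obstruction. Still, two concrete gaps in your sketches are worth naming. In your first (current-theoretic) route, the passage from ``$\hat{\gamma}$ carries positive $(1,1)$-currents with boundary in $\gamma$'' to ``these currents are holomorphic $1$-chains'' is not available under pluripolarity alone: the relevant structure theorems of Harvey--Lawson require extra regularity --- their projective analogue of Wermer's theorem (\cite{HL}, Theorem 12.9, quoted in the paper's closing remark) assumes the closure of $\hat{\gamma}$ has finite two-dimensional Hausdorff measure near a hypersurface. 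Without some such hypothesis, the hull of a merely pluripolar curve is not known to carry any analytic structure, so your second step cannot be carried out as stated.

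Your second, ``more elementary'' route is in substance the paper's own strategy for the provable special case. Lemma \ref{harmonic} reduces everything to an upper bound $V_{\gamma}(z,\phi(z)) \leq -m \log|z|$ near the origin (indeed only the single polynomial $p(z,w)=w$ is used there), and Theorem \ref{meromorphic} obtains this by proving that $z \mapsto V_{\gamma}(z,\phi(z))$ is harmonic on $\mathbb{D}^*$, via a Sadullaev-type gluing argument (Lemma \ref{modify}) that crucially uses $\Sigma = \hat{\gamma} \cap (\mathbb{D}^* \times \mathbb{C})$ to manufacture competitors that are large on a tube around the graph. Under the bare hypothesis $\Sigma \subset \hat{\gamma}$, pointwise finiteness of $V_{\gamma}$ on $\Sigma$ gives no local boundedness --- the paper flags exactly this after Lemma \ref{harmonic}. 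The function $U(z) = V_{\gamma}(z,\phi(z))$ is a pointwise supremum of subharmonic functions, but its upper semicontinuous regularization may differ from $U$ and need not be locally bounded, so the subharmonicity/Harnack machinery has no starting point, and your proposed control of $C_{z}$ along sequences $z_{n} \to 0$ is precisely the missing ingredient rather than a tool. The Picard-theorem idea is too underspecified to evaluate. In short: you have correctly identified that the statement is open and why, but neither of your routes closes the gap.
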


As a first step we prove that if the extremal function $V_{\gamma}$ is harmonic on $\Sigma$, then $\phi$ is in fact meromorphic.

\begin{lemma} \label{harmonic} Under the hypotheses of Proposition \ref{holomorphic}, if $z \mapsto V_{\gamma}(z,\phi(z))$ is harmonic on $\mathbb{D}^*$, then $\phi$ is meromorphic in $\mathbb{D}$. \end{lemma}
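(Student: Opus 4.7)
My plan is to exploit the non-negativity of $V_{\gamma}$ together with Bôcher's classical theorem on isolated singularities of non-negative harmonic functions, and then test the projective-hull inequality against the monomials $p(z,w)=w^{n}$ to turn the logarithmic behaviour of $u(z):=V_\gamma(z,\phi(z))$ at the origin into polynomial growth of $\phi$. By Proposition~\ref{holomorphic}, $\phi$ is already known to be holomorphic on $\mathbb{D}^{*}$, so the task reduces to showing that $\phi$ has at worst a pole at $0$.

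First I would note that $u(z)=V_{\gamma}(z,\phi(z))\ge 0$ throughout $\mathbb{D}^{*}$, since $V_{\gamma}\ge 0$ on all of $\mathbb{C}^{2}$, and by hypothesis $u$ is harmonic on $\mathbb{D}^{*}$. Bôcher's theorem then produces $\alpha\ge 0$, a radius $r>0$, and a harmonic function $v$ on $\{|z|<r\}$ such that
\[
u(z) = \alpha\log(1/|z|) + v(z), \qquad 0<|z|<r.
\]
Next I would use the identity $V_{\gamma}(z,\phi(z))=\log C_{(z,\phi(z))}$, where $C_{(z,\phi(z))}$ is the least constant for which $|p(z,\phi(z))|\le C^{\deg p}\|p\|_{\gamma}$ for every polynomial $p$. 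Choosing the test polynomial $p(z,w)=w^{n}$, which has $\deg p=n$ and $\|p\|_{\gamma}=\|\phi\|_{\mathbb{T}}^{n}$, gives
\[
|\phi(z)|^{n} \le e^{nu(z)}\|\phi\|_{\mathbb{T}}^{n},
\]
so
\[
|\phi(z)| \le \|\phi\|_{\mathbb{T}}\,e^{u(z)} = \|\phi\|_{\mathbb{T}}\,e^{v(z)}\,|z|^{-\alpha}
\]
on $0<|z|<r$. Since $v$ is locally bounded near the origin, this yields $|\phi(z)|=O(|z|^{-\alpha})$ as $z\to 0$.

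Because $\phi$ is holomorphic on $\mathbb{D}^{*}$ and grows at most like $|z|^{-\alpha}$ near the origin, its isolated singularity at $0$ is either removable or a pole (of order at most $\lceil\alpha\rceil$), so $\phi$ is meromorphic in $\mathbb{D}$. I do not expect a serious obstacle: the only step that requires real thought is the choice of test polynomial, namely the monomial $w^{n}$, which converts the exponent $\deg p$ in the projective-hull inequality into a linear bound on $|\phi|$; once that observation is in place, Bôcher's theorem supplies precisely the logarithmic-singularity dichotomy needed to finish the argument.
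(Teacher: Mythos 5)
Your proof is correct and follows essentially the same route as the paper: the paper obtains the bound $V_\gamma(z,\phi(z))\le -m\log|z|$ near the origin by extending $-V_\gamma(z,\phi(z))$ subharmonically across $0$ and applying the Riesz decomposition, which is just the subharmonic-function packaging of the Bôcher decomposition you invoke, and then tests the hull inequality with $p(z,w)=w$ exactly as you do. (Your use of $w^n$ is harmless but redundant, since after taking $n$-th roots it yields the same estimate as $n=1$.)
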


\begin{proof} Let $h(z) = -V_{\gamma}(z,\phi(z))$.  Since $V_{\gamma} \geq 0$, $h$ is harmonic and bounded above on $\mathbb{D}^*$, and thus extends to be subharmonic on $\mathbb{D}$ (the union of $\mathbb{D}^*$ with the polar set $\{0\}$) by defining $h(0) = \limsup_{z \rightarrow 0} h(z)$.  Then $\triangle u = \alpha \delta_{0}$ for some constant $\alpha$, where $\delta_{0}$ is the unit point mass at the origin, so
\[ \beta(z) := h(z) - \alpha \frac{\log |z|}{2 \pi} \]
is in fact harmonic on $\mathbb{D}$, and therefore is bounded below near the origin.  It follows that there exists a constant $m \geq 0$ such that
\begin{equation} V_{\gamma}(z,\phi(z)) \leq -m \log |z| \label{eq:log} \end{equation}
Recall that $V_{\gamma}(z,\phi(z)) = \log C_{z}$, where $C_{z}$ is the least constant such that $|p(z)| \leq C_{z}^{\deg{p}} \|p\|_{\gamma}$ for all polynomials $p$.  Taking $p(z,w) = w$ we obtain for all $z$ near the origin
\[ |\phi(z)| \leq C_{z} \|\phi\|_{\mathbb{T}} = \exp(V_{\gamma}(z,\phi(z))) \|\phi\|_{\mathbb{T}}  \leq C|z|^{-m} \]
implying that $\phi$ has a pole of order at most $m$ at $z = 0$.
\end{proof}

Note that while the assumption $\Sigma \subset \hat{\gamma}$ implies that $V_{\gamma}$ is finite on $\Sigma$, \emph{a priori} we have no reason to believe that $V_{\gamma}$ has any regularity, e.g., is even locally bounded, on $\Sigma$.  However, by making a stronger assumption on $\hat{\gamma}$ we can prove that $V_{\gamma}$ is harmonic and thereby establish the following special case of Conjecture \ref{conj}:

\begin{theorem} \label{meromorphic} If $\phi \in C(\mathbb{D}^* \cup \mathbb{T})$ and $\Sigma = \hat{\gamma} \cap (\mathbb{D}^* \times \mathbb{C})$, then $\phi$ is meromorphic in $\mathbb{D}$.  \end{theorem}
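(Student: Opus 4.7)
The plan is to apply Lemma \ref{harmonic} after verifying its two hypotheses in the present setting: (i) $\gamma$ is pluripolar, so that Proposition \ref{holomorphic} gives $\phi$ holomorphic on $\mathbb{D}^*$, and (ii) the function $u(z) := V_\gamma(z,\phi(z))$ is harmonic on $\mathbb{D}^*$. With both in hand, the lemma will deliver the desired meromorphy of $\phi$ in $\mathbb{D}$.

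For (i), I would use the \emph{equality} $\Sigma = \hat\gamma \cap (\mathbb{D}^* \times \mathbb{C})$ rather than just containment: the graph $\Sigma$ is a proper subset of $\mathbb{D}^* \times \mathbb{C}$, so $\hat\gamma$ must be a proper subset of $\mathbb{P}^2$, and Proposition \ref{hulls} then forces $\gamma$ to be pluripolar. Proposition \ref{holomorphic} hands over holomorphy of $\phi$ on $\mathbb{D}^*$, so that $\Phi(z) := (z,\phi(z))$ is a holomorphic parametrisation of $\Sigma$ over $\mathbb{D}^*$. The envelope description $V_\gamma(\zeta) = \sup\{(\deg p)^{-1}\log|p(\zeta)| : p \text{ polynomial}, \|p\|_\gamma \leq 1\}$ then exhibits $u(z) = V_\gamma(\Phi(z))$ as the supremum of the subharmonic functions $z \mapsto (\deg p)^{-1}\log|p\circ\Phi(z)|$. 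Once local boundedness above is established, the upper semicontinuous regularisation $u^*$ is subharmonic on $\mathbb{D}^*$ and agrees with $u$ outside a polar set.

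The main obstacle is (ii), promoting subharmonicity to harmonicity. Here the equality hypothesis must be used essentially, since mere containment $\Sigma \subset \hat\gamma$ is only conjecturally sufficient (Conjecture \ref{conj}). My strategy is a contradiction argument: suppose $u$ fails to satisfy the mean-value equality on some closed disk $\overline{D} \subset \mathbb{D}^*$, so that the Poisson integral $h$ of $u|_{\partial D}$ satisfies $h(z_1) > u(z_1)$ at some $z_1 \in D$. The plan is then to leverage this deficit along the one-dimensional analytic disk $\Phi(D)$ to construct a plurisubharmonic function on the two-dimensional ambient set $D \times \mathbb{C}$ which is non-positive on $\gamma$ and finite at some off-graph point $(z_1, w_1)$ with $w_1 \neq \phi(z_1)$; such a function would place $(z_1,w_1)$ in $\hat\gamma \cap (\mathbb{D}^* \times \mathbb{C})$, contradicting the hypothesis. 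Concretely, I would pick polynomials $p_k$ with $\|p_k\|_\gamma \leq 1$ whose pullbacks $(\deg p_k)^{-1}\log|p_k\circ\Phi|$ approximate $u$ on $\partial D$, run them through Poisson integration on $D$, and then exploit the codimension of $\Phi(D)$ inside $D \times \mathbb{C}$ to push finite values transversally off the graph. Carrying out this off-graph extension of the extremal function is the crux of the argument and the point at which the specific strength of the hypothesis $\Sigma = \hat\gamma \cap (\mathbb{D}^* \times \mathbb{C})$ must be put to work.
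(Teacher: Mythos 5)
Your reduction to Lemma \ref{harmonic} is right, and your observation that the equality $\Sigma=\hat{\gamma}\cap(\mathbb{D}^*\times\mathbb{C})$ forces $\hat{\gamma}\neq\mathbb{P}^2$, hence (by Proposition \ref{hulls}) $\gamma$ pluripolar and $\phi$ holomorphic on $\mathbb{D}^*$, is correct and indeed implicit in the paper. But the heart of the matter --- harmonicity of $u(z)=V_{\gamma}(z,\phi(z))$ --- is where your plan breaks down, and for a structural reason: your contradiction scheme uses the hypothesis in the wrong logical direction. You propose to build a plurisubharmonic function of logarithmic growth, nonpositive on $\gamma$ and \emph{finite} at an off-graph point $(z_1,w_1)$, and to conclude that $(z_1,w_1)\in\hat{\gamma}$. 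That conclusion does not follow: membership of a point $q$ in $\hat{\gamma}$ means $V_{\gamma}(q)<\infty$, i.e.\ \emph{all} competitors are uniformly bounded at $q$; exhibiting a single finite competitor proves nothing, since for instance $\max\{0,\log|p|/\deg p\}$ is finite everywhere and $\le 0$ on $\gamma$ for every polynomial $p$ with $\|p\|_{\gamma}\le 1$. So the intended contradiction cannot be reached this way.

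The equality hypothesis has to be exploited in the opposite direction: since every point of $(\mathbb{D}^*\times\mathbb{C})\setminus\Sigma$ lies \emph{outside} $\hat{\gamma}$, one has $V_{\gamma}=+\infty$ there, so at each such point there are competitors in $L(\mathbb{C}^2)\cap C(\mathbb{C}^2)$, nonpositive on $\gamma$, taking arbitrarily large values. By compactness, finitely many of these yield a single $U$ with $U\le 0$ on $\gamma$ and $U>\|v\|_{\overline{D}}$ on the lateral boundary $\{(z,w): z\in\overline{D},\ |w-\phi(z)|=\rho\}$ of a thin tube around the graph; this $U$ is exactly what permits gluing $\max\{U,u,\tilde{v}-\epsilon\}$ (with $\tilde{v}(z,w)=v(z)$ the harmonic extension, constant in $w$) inside the tube to $\max\{U,u\}$ outside, producing a new competitor whose restriction to the graph dominates $v-\epsilon$. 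That is the paper's Lemma \ref{modify}, and it is the crux you are missing. Iterating over an increasing sequence of competitors and invoking Harnack's theorem then identifies $u$ on each disk with a harmonic function; this also disposes of the regularity issue you pass over with ``once local boundedness above is established'' --- a priori $V_{\gamma}$ need not be even locally bounded on $\Sigma$, and replacing $u$ by its regularisation $u^*$ would leave a polar exceptional set that Lemma \ref{harmonic} cannot absorb. Without the tube-capping construction your argument does not close.
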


The proof of Theorem \ref{meromorphic} follows closely the reasoning in the proof of Theorem 7.3 of \cite{HL}, which is itself an adaptation of arguments of Sadullaev in \cite{Sa}. It is based upon the following lemma.

\begin{lemma} \label{modify} Under the hypotheses of Theorem \ref{meromorphic}, assume $u \in L(\mathbb{C}^2) \cap C(\mathbb{C}^2)$ with $u \leq 0$ on $\gamma$.  Fix an open disk $D$ with $\overline{D} \subset \mathbb{D}^{*}$, and let $v$ be the harmonic extension to $D$ of the restriction of $u(z,\phi(z))$ to $\partial D$.  Then for every $\epsilon > 0$ there exists $u_{\epsilon} \in L(\mathbb{C}^2) \cap C(\mathbb{C}^2)$ with $u_{\epsilon} \leq 0$ on $\gamma$ and
\begin{equation} u_{\epsilon}(z,\phi(z)) \geq \max\{ u(z,\phi(z)), v(z) - \epsilon \} \label{eq:harmonic} \end{equation}
for all $z \in \overline{D}$.
\end{lemma}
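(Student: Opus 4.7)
My plan is to define $u_\epsilon$ as the max of $u$ with a suitably normalized logarithm $\tfrac{1}{N}\log|P|$, where $P$ is a polynomial built so that $\|P\|_\gamma\le 1$ while $\tfrac{1}{N}\log|P(z,\phi(z))|$ closely approximates $v(z)$ on $\overline D$. This follows the template of Theorem 7.3 of \cite{HL}.

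First, since $\overline D\subset\mathbb{D}^{*}$ and $\phi$ is holomorphic on $\mathbb{D}^{*}$ by Proposition \ref{holomorphic}, the map $\psi(z)=(z,\phi(z))$ parametrizes the analytic disk $\Delta:=\psi(\overline D)$ holomorphically on a neighborhood of $\overline D$. Because $D$ is simply connected and $v$ is harmonic on $D$, I may write $v=\mathrm{Re}(F_0)$ for some holomorphic $F_0$ on $D$; by enlarging $D$ slightly to some $D'$ with $\overline{D'}\subset\mathbb{D}^{*}$ and replacing $v$ by the harmonic extension from $\partial D'$, I can arrange that $F_0$ is holomorphic on a neighborhood of $\overline D$, at the expense of tightening $\epsilon$ using the uniform continuity of $u\circ\psi$ near $\partial D$. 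The subharmonic function $u\circ\psi$ is then dominated on $\overline D$ by its harmonic majorant $v$.

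The crux is to construct, for each $\delta>0$, a polynomial $P(z,w)$ of some degree $N$ with
\[
\|P\|_\gamma\le 1 \qquad\text{and}\qquad \Bigl|\tfrac{1}{N}\log|P(z,\phi(z))|-v(z)\Bigr|<\delta\ \text{ on }\overline D.
\]
The natural target to approximate on $\Delta$ is the holomorphic function $G(z,w):=e^{NF_0(z)}$, whose modulus equals $e^{Nv(z)}$ there. This is where the strengthened hypothesis $\Sigma=\hat\gamma\cap(\mathbb{D}^{*}\times\mathbb{C})$ is essential: it forces $V_\gamma\equiv+\infty$ at every point of $\mathbb{D}^{*}\times\mathbb{C}$ off $\Sigma$, which supplies the ``separation'' between $\Delta$ and $\gamma$ needed to adapt the Sadullaev-style polynomial approximation of \cite{Sa} (as used in \cite{HL}): a Runge-type argument on $\gamma\cup\Delta$ produces polynomials of degree $N$ that, after normalization, satisfy the two displayed inequalities. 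This approximation step is the main obstacle, and the weaker hypothesis $\Sigma\subset\hat\gamma$ of Proposition \ref{holomorphic} is not sufficient.

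With such $P$ in hand (applied with $\delta=\epsilon/2$), define
\[
u_\epsilon(z,w):=\max\!\Bigl\{\,u(z,w),\ \tfrac{1}{N}\log|P(z,w)|-\tfrac{\epsilon}{2}\,\Bigr\}.
\]
Since $\tfrac{1}{N}\log|P|\in L(\mathbb{C}^2)$ and $u\in L(\mathbb{C}^2)\cap C(\mathbb{C}^2)$, the maximum is plurisubharmonic and of logarithmic growth; it is also continuous on $\mathbb{C}^2$, because on the zero set of $P$ the first argument of the max dominates, so $u_\epsilon=u$ there. On $\gamma$ both $u\le 0$ and $\tfrac{1}{N}\log|P|\le 0$, hence $u_\epsilon\le 0$. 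Finally, for $z\in\overline D$, we trivially have $u_\epsilon(z,\phi(z))\ge u(z,\phi(z))$, and also
\[
u_\epsilon(z,\phi(z))\ \ge\ \tfrac{1}{N}\log|P(z,\phi(z))|-\tfrac{\epsilon}{2}\ \ge\ v(z)-\epsilon,
\]
which together give (\ref{eq:harmonic}) and complete the plan.
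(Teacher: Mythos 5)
There is a genuine gap at the step you yourself flag as ``the main obstacle.'' Your whole construction rests on producing, for each $\delta>0$, a polynomial $P$ of degree $N$ with $\|P\|_\gamma\le 1$ and $\bigl|\tfrac1N\log|P(z,\phi(z))|-v(z)\bigr|<\delta$ on $\overline D$, and you justify this only by gesturing at ``a Runge-type argument on $\gamma\cup\Delta$.'' That does not work as stated: an Oka--Weil/Runge argument would require control of the polynomially convex hull of $\gamma\cup\Delta$ (or at least holomorphy of the target on a neighborhood of that hull), and nothing in the hypotheses gives this. The condition $\Sigma=\hat\gamma\cap(\mathbb{D}^*\times\mathbb{C})$ only tells you that $V_\gamma=+\infty$ off the graph, i.e.\ that for each such point there is \emph{some} Lelong-class function (or normalized $\tfrac1N\log|p|$) that is large there and $\le 0$ on $\gamma$; converting that pointwise information into a single degree-$N$ polynomial whose normalized log is uniformly $\delta$-close to the harmonic function $v$ along the graph, while staying below $1$ in modulus on $\gamma$, is essentially a Bernstein--Walsh/Sadullaev-type theorem that you would have to prove, and it is at least as hard as the lemma itself. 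As written, the proof is circular at its core.

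The paper avoids polynomial approximation entirely, and this is the real point of the argument. It uses the fact that $V_\gamma$ can be computed over the full Lelong class, extends $v$ trivially to the cylinder by $\tilde v(z,w)=v(z)$ (which is pluriharmonic, hence plurisubharmonic, on $D\times\mathbb{C}$ -- this does the job you wanted $\tfrac1N\log|P|$ to do, with no approximation error), and then glues $\max\{U,u,\tilde v-\epsilon\}$ inside a thin tube $\Omega=\{(z,w):z\in D,\ |w-\phi(z)|<\rho\}$ against $\max\{U,u\}$ outside. Here $U$ is a continuous barrier, a finite maximum of Lelong-class functions that are $\le 0$ on $\gamma$ and exceed $\|v\|_{\overline D}$ on the lateral boundary $Y$ of the tube; its existence is exactly where the hypothesis $\Sigma=\hat\gamma\cap(\mathbb{D}^*\times\mathbb{C})$ enters, via compactness of $Y$. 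Continuity of $\phi$ suffices for all of this (your appeal to Proposition 3.2 to make the graph an analytic disk is unnecessary), and the standard gluing lemma for plurisubharmonic functions replaces the approximation machinery. If you want to salvage your approach, you would need to supply a genuine proof of the polynomial approximation claim; the barrier-plus-gluing route is the way to bypass it.
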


\begin{proof} For $z \in \overline D$ and $w \in \mathbb{C}$ set $\tilde{v}(z,w) = v(z)$.  By the continuity of $u$ and $v$ we may choose $\rho > 0$ sufficiently small so that \begin{equation} \tilde{v} - \epsilon < u \mbox{ on } X := \{ (z,w): z \in \partial D, |w - \phi(z)| \leq \rho\}. \label{eq:x} \end{equation}  Since $\Sigma = \hat{\gamma} \cap (\mathbb{D}^* \times \mathbb{C})$, and $V_{\gamma} = \infty$ on $\mathbb{C}^2 \setminus \hat{\gamma}$, for each point $q \in (\mathbb{D}^* \times \mathbb{C}) \setminus \Sigma$,  we may choose a function $U_{q} \in L(\mathbb{C}^2) \cap C(\mathbb{C}^2)$ with $U_{q} \leq 0$ on $\gamma$ and $U_{q}(q) > \|v\|_{\overline{D}}$.  Thus by compactness of $Y := \{ (z,w): z \in \overline{D}, |w - \phi(z)| = \rho\}$, there exist finitely many functions $U_{1}, \ldots ,U_{N} \in L(\mathbb{C}^2) \cap C(\mathbb{C}^2)$ so that \begin{equation} U := \max\{ U_{1}, \ldots ,U_{N} \} > \|v\|_{\overline{D}} = \|\tilde{v}\|_{Y} \mbox{ on } Y\label{eq:y} \end{equation} and $U \leq 0$ on $\gamma$.  Then $U \in L(\mathbb{C}^2) \cap C(\mathbb{C}^2)$.  Set $\Omega = \{ (z,w): z \in D, |w - \phi(z)| < \rho \}$, and define
\[ u_{\epsilon} = \left\{ \begin{array}{ll}
                          \max\{ U, u, \tilde{v} - \epsilon \}  & \mbox{ on } \Omega \\
                            \max\{ U, u \} & \mbox{ on } \mathbb{C}^2 \setminus \Omega.
                          \end{array} \right. \]
                          Since $\partial \Omega = X \cup Y$, equations (\ref{eq:x}) and (\ref{eq:y}) imply that $u_{\epsilon}$ is continuous.  Moreover, $U$ and $u$ are plurisubharmonic on $\mathbb{C}^2$ while $\tilde{v}$ is plurisubharmonic on $\Omega$, so by a standard ``gluing'' result (see \cite{Kl}, Corollary 2.9.15) $u_{\epsilon}$ is plurisubharmonic on $\mathbb{C}^2$.  Since both $U$ and $u$ are of logarithmic growth, and less than or equal to zero on $\gamma$, the same is true of $u_{\epsilon}$ (note $\gamma \subset \mathbb{C}^2 \setminus \Omega$).  Finally, $\overline{D} \times \mathbb{C} \subset \overline{\Omega}$, so (\ref{eq:harmonic}) clearly holds.
 \end{proof}

\begin{proof}[Proof of Theorem \ref{meromorphic}]
By Lemma \ref{harmonic}, to show that $\phi$ is meromorphic, it suffices to show that $V_{\gamma}(z,\phi(z))$ is harmonic in $\mathbb{D}^*$.  To this end, fix an arbitrary disk $D$ with $\overline{D} \subset \mathbb{D}^*$ and a point $z_{0} \in D$.  We may choose a sequence
$\{ u_m \} \subset L(\mathbb{C}^2) \cap C(\mathbb{C}^2)$ with $u_m \leq 0$ on $\gamma$
such that $\lim_{m \rightarrow \infty} u_{m}(z_{0}) = V_\gamma(z_0,\phi(z_0))$. Replacing $u_{m}$ with $\max\{ u_{1}, \ldots , u_{m}\}$, we may assume that the sequence $u_{m}$ is increasing.  Let $v_{m}$ be the harmonic extension to $D$ of the restriction of $u_m(z,\phi(z))$ to $\partial D$.  Note that since $\{u_{m} \}$ is increasing, the sequence $\{v_{m}\}$ is also increasing.  Taking $\epsilon = 1/m$, Lemma \ref{modify} gives a function $\tilde{u}_m = u_{m,1/m}$ with
\begin{equation} u_{m}(z,\phi(z)) - 1/m \leq v_{m}(z) - 1/m \leq \tilde{u}_{m}(z,\phi(z)) \leq V_{\gamma}(z,\phi(z)) \label{eq:harm} \end{equation}
for all $z \in \overline{D}$, the rightmost inequality following from the fact that $\tilde{u}_{m} \in L(\mathbb{C}^2)$ and $\tilde{u}_{m} \leq 0$ on $\gamma$.  By (\ref{eq:harm}) and our choice of the sequence $u_{m}$, \[ \lim_{m \rightarrow \infty} v_{m}(z_{0}) = V_{\gamma}(z_{0},\phi(z_{0})) < \infty. \]  Harnack's Theorem (\cite{Ra}, Theorem 1.3.9) then implies that the sequence $v_{m}$ converges locally uniformly to a function $s$ harmonic in $D$; by (\ref{eq:harm}) we have $s(z) \leq V_{\gamma}(z,\phi(z))$ for all $z \in D$.  Now choose any point $z_{1} \in D, z_{1} \neq z_{0}$, and repeat the above argument, beginning with an increasing sequence $\{U_{m}\} \subset L(\mathbb{C}^2) \cap C(\mathbb{C}^2)$ such that \begin{equation} \lim_{m \rightarrow \infty} U_{m}(z_{1}) = V_{\gamma}(z_{1},\phi(z_{1})). \label{eq:zlim} \end{equation} We may replace $U_{m}$ by $\max\{U_{m},u_{m}\}$ and retain (\ref{eq:zlim}), since $U_{m}$ and $u_{m}$ are both bounded above by $V_{\gamma}$; thus we may assume that $U_{m} \geq u_{m}$.  Arguing as above, we obtain a function $t$ harmonic in $D$ with $t(z_{1}) = V_{\gamma}(z_{1},\phi(z_{1}))$ and $t(z) \leq V_{\gamma}(z,\phi(z))$ for all $z \in D$.  Moreover, since $U_{m} \geq u_{m}$, we have $t \geq s$ on $D$.  But then \[ V_{\gamma}(z_{0},\phi(z_{0})) = s(z_{0}) \leq t(z_{0}) \leq V_{\gamma}(z_{0},\phi(z_{0})), \] implying $s(z_{0}) = t(z_{0})$.  By the maximum principle, $s \equiv t$ on $D$, and so the harmonic function $s(z)$  is equal to $V_{\gamma}(z,\phi(z))$ when $z = z_{0}, z_{1}$.  But $z_{1}$ was an arbitrary point in $D$, and therefore $s(z) = V_{\gamma}(z,\phi(z))$ for all $z \in D$, implying $V_{\gamma}(z,\phi(z))$ is harmonic in $D$.  Since $D$ was arbitrary, $V_{\gamma}(z,\phi(z))$ is harmonic on $\mathbb{D}^*$.  This completes the proof. \end{proof}

\begin{remark} The hypothesis $\Sigma = \hat{\gamma} \cap (\mathbb{D}^* \times \mathbb{C})$ was used only in the proof of Lemma \ref{modify}, to ensure that (locally) points at some fixed small distance from $\Sigma$ are disjoint from $\hat{\gamma}$, and therefore $V_{\gamma} = \infty$ at such points.  We do not know if this assumption is necessary in order to conclude that $\phi$ is meromorphic, but it is in the spirit of certain ``regularity'' hypotheses of $\hat{\gamma}$ appearing in the work of Harvey and Lawson. For example, Harvey and Lawson prove the following analogue of Wermer's theorem on the polynomial hull of a curve in $\mathbb{C}^n$: if a smooth pluripolar curve $\gamma \subset \mathbb{P}^n$ has the property that the closure of $\hat{\gamma}$ has finite two-dimensional Hausdorff measure in a neighborhood of some complex hypersurface, then $\overline{\hat{\gamma}} \setminus \gamma$ is a 1-dimensional complex analytic subvariety of $\mathbb{P}^n \setminus \gamma$ (\cite{HL}, Theorem 12.9).   \end{remark}

\begin{remark} The condition given in Lemma \ref{harmonic} - that $V_{\gamma}$ is harmonic on $\Sigma$ - is sufficient to show that $\phi$ is meromorphic in $\mathbb{D}^*$, but may not be necessary.  Note that in the proof of Lemma \ref{harmonic} we did not use the full strength of the assumption $\Sigma \subset \hat{\gamma}$, which is equivalent to (\ref{eq:hatgamma}).  Instead, we only used (\ref{eq:hatgamma}) for the particular polynomial $p(z,w) = w$, and so it is possible that the conclusion of Theorem \ref{meromorphic} obtains in cases where $V_{\gamma}$ is not harmonic.
\end{remark}

\end{document}